\documentclass{amsart}

\usepackage{ulsy}
\usepackage{amsmath}
\usepackage{algorithm}
\usepackage{algpseudocode}
\usepackage{amsfonts}
\usepackage{amssymb}
\usepackage{amsthm}
\usepackage{color}

\newcommand{\Z}{\mathbb{Z}}

\newcommand{\Q}{\mathbb{Q}}

\newcommand{\bp}{\begin{problem}}

\newcommand{\ep}{\end{problem}}

\newcommand{\ba}{\begin{answer}}

\newcommand{\ea}{\end{answer}}

\newcommand{\ben}{\renewcommand{\theenumi}{\alph{enumi}}

\renewcommand{\labelenumi}{(\theenumi)}\begin{enumerate}}

\newcommand{\een}{\end{enumerate}}

\newcommand{\Aut}{\mathrm{Aut}}

\newtheorem{defin}{Definition}[section]

\newtheorem{lem}[defin]{Lemma}

\title[A Polynomial Time Algorithm For the Conjugacy Problem in $\Z^n \rtimes \Z$]{A Polynomial Time Algorithm For The Conjugacy Decision and Search Problems in Free Abelian-by-Infinite Cyclic Groups}

\begin{document}

\date{August 2, 2014}

\author{Bren Cavallo}
\address{Bren Cavallo, Department of Mathematics, CUNY Graduate Center, City University of New York}
\email{bcavallo@gc.cuny.edu}

\author{Delaram Kahrobaei}
\address{Delaram Kahrobaei, CUNY Graduate Center, PhD Program in Computer Science and NYCCT, Mathematics Department, City
University of New York} \email{dkahrobaei@gc.cuny.edu}
\nocite{*}
\begin{abstract}
In this paper we introduce a polynomial time algorithm that solves both the conjugacy decision and search problems in free abelian-by-infinite cyclic groups where the input is elements in normal form.  We do this by adapting the work of Bogopolski, Martino, Maslakova, and Ventura in \cite{bogopolski2006conjugacy} and Bogopolski, Martino, and Ventura in \cite{bogopolski2010orbit}, to free abelian-by-infinite cyclic groups, and in certain cases apply a polynomial time algorithm for the orbit problem over $\Z^n$ by Kannan and Lipton \cite{kannan1986polynomial}.
\end{abstract}

\maketitle

\thispagestyle{empty}

\section{Introduction}
The conjugacy decision problem in a finitely presented group $G$, is determining if there is a solution to the equation $v = x u x^{-1}$ where $u,v,x \in G$.  The decision problem also has the search variant, given $u$ and $v$ conjugate, find an explicit $x$ that conjugates $u$ to $v$.   The conjugacy decision problem is in general undecidable \cite{miller1992decision}, whereas the search problem is decidable in every recursively presented group \cite{groupcrypto}.

\

Due to the rise of applications of group theory to computer science and cryptography, more research has been directed towards studying the algorithmic complexity of group theoretic algorithms rather than solely investigating decidability.  Other polynomial time algorithms for the conjugacy problem in solvable groups are due to Vassileva in free solvable groups \cite{vassileva2011polynomial} and Diekert, Miasnikov, and Wei{\ss} in solvable Baumslag-Solitar groups \cite{diekert2013conjugacy}.  Some very related results can also be seen in the work of Sale \cite{sale2011short, sale2012conjugacy}, in which he shows that for a special class of the groups studied in this paper, the conjugacy length function is bounded from above by a linear function.  Namely, for any two conjugate elements in these groups, there exists a conjugator of geodesic length less than a constant multiple of the sum of the geodesic lengths of the elements.  

\ 

In the following sections we introduce a polynomial time algorithm that solves both the conjugacy decision and search problems in free abelian-by-infinite cyclic groups where elements are given in terms of their normal forms. This family of groups is polycyclic so it is well known that they have a solvable conjugacy problem.  This fact is due originally to Formanek \cite{formanek1976conjugate} and Remesslennikov \cite{remeslennikov1969conjugacy} who independently proved that virtually polycyclic groups are conjugacy separable:  for any two $u, v \in G$ that are not conjugate, there exists a finite homomorphic image in which the images of $u$ and $v$ are not conjugate.  Conjugacy can be solved in such groups by conjugating $u$ by elements of $G$ and checking if the result is $v$ while simultaneously enumerating all homomorphisms from $G$ into a finite group and checking if the images of $u$ and $v$ are conjugate.  One of the processes is guaranteed to stop which then provides an answer to the problem.  This algorithm is brute force and clearly may take very long even in simple cases.

\

We start the paper with a review of free abelian-by-infinite cyclic groups and the twisted conjugacy problem. We then detail the algorithm due to Bogopolski, Martino, Maslakova, and Ventura from \cite{bogopolski2006conjugacy} and prove that it, along with the solution to the orbit problem due to Kannan and Lipton \cite{kannan1986polynomial}, solves both the conjugacy decision and search problems in polynomially many steps with respect to the lengths of the inputs in normal form.  Finally we end with a complexity analysis of the algorithm and discuss how the complexity changes when inputs are considered in their geodesic forms rather than normal forms.

\section{Free Abelian-by-Infinite Cyclic Groups}
We say that a group $G$ is \emph{free abelian-by-cyclic} if $G$ fits into a short exact sequence of the form:

$$1 \rightarrow \Z^n \rightarrow G \rightarrow C \rightarrow 1$$

where $C$ is a cyclic group.  If $C \simeq \Z$, then we say $G$ is \emph{free abelian-by-infinite cyclic}.  In this case, $G$ splits as $\Z^n \rtimes_\phi \Z$ for some $\phi \in \mbox{GL}_n (\Z)$.  Therefore, $G$ has the presentation:

$$\langle g_1, g_2, \cdots, g_n, t \, | \, t g_i t^{-1} = \phi (g_i), [g_i, g_j] = 1 \rangle$$

where $1 \leq i < j \leq n$ and where we view the $g_i$ as the generators of $\Z^n$ and $t$ as the generator of $\Z$.  As such, any $g \in G$ can be written as $w_1 t^{k_1} w_2 t^{k_2} \cdots w_m t^{k_m}$ where each $w_i \in \Z^n$ and $k_i \in \Z$.  Applying the relations of the form $t g_i t^{-1} = \phi (g_i)$ multiple times, one can move all the $t^{k_i}$ over to the right side of the word, thus representing each element as $wt^k$ where $w \in \Z^n$ and $t^k \in \Z$.  For any $g \in G$ we call such a representative its \emph{normal form}.  Multiplication in normal forms can then be carried out as:

$$w t^k \cdot w' t^{k'} = w\phi^k(w')t^{k + k'}.$$

Namely, every time we need to move $t^k$ to the right, over a word in $\Z^n$, we can do so by applying $\phi^k$.  It can additionally be seen (see \cite{eick2001algorithms}) that each group element's normal form is unique.

\

For the remainder of this paper, we will be working entirely with elements in their normal forms and as such assume in the following algorithm that elements are given in their normal form.  We also define a length function, $|\cdot|$, over elements of $G$ where if $g =_G w t^k$, then:
$$|g| = |w t^k| = |w|_{\Z^n} + |k|$$
where $|w|_{\Z^n}$ is the standard geodesic length of $w \in \Z^n$.

\section{The Twisted Conjugacy Problem}
\begin{defin}
Given a finitely presented group, $G$, an autormorphism $\phi \in \Aut(G)$, and $u, v \in G$ we say $u$ and $v$ are \emph{twisted conjugate} by $\phi$ if there exists $x \in G$ such that
$$v = x u \phi(x^{-1}).$$

If $u$ and $v$ are twisted conjugate by $\phi$ we write:
$$u \sim_\phi v.$$
\end{defin}

Notice that the standard conjugacy problem is a special case of the twisted conjugacy problem by taking $\phi$ to be the identity.

\

In \cite{bogopolski2006conjugacy} Bogopolski, Martino, Maslakova, and Ventura  introduced an algorithm that relates the conjugacy problem in free-by-infinite cyclic groups to the twisted conjugacy problem in free groups.   Following that work, Bogopolski, Martino, and Ventura \cite{bogopolski2010orbit} adapted the algorithm from \cite{bogopolski2006conjugacy} to solve the conjugacy problem in a variety of groups created by extensions.   What follows is an adaptation of their algorithm for free abelian-by-cyclic groups.

\section{The Algorithm}
The following lemma and proof is taken directly from the beginning of section 2 in \cite{bogopolski2006conjugacy} and adapted to free abelian-by-infinite cyclic groups.
\begin{lem} Let $u = w t^s$ and $v =x  t^r $ in $\Z^n \rtimes_\phi \Z$ be conjugate.  Then $s = r$ and there exists $e \in \Z$ such that $\phi^{e}(w) \sim_{\phi^s} x$ in $\Z^n$.  Additionally, if $\phi^s = \phi^r$ is the identity, then $x = \phi^e (w)$ for some $e \in \Z$.
\end{lem}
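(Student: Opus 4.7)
The plan is to take any conjugator of $u$ and $v$, put it in normal form, and compute the conjugate explicitly using the multiplication rule already recorded in the paper, then invoke uniqueness of normal form to extract all three conclusions.

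Concretely, I would begin by assuming $v = yuy^{-1}$ for some $y \in G$, and writing $y$ in normal form as $y = at^e$ with $a \in \Z^n$ and $e \in \Z$. Using $w_1 t^{k_1} \cdot w_2 t^{k_2} = w_1 \phi^{k_1}(w_2)\, t^{k_1+k_2}$, a short calculation gives $y^{-1} = \phi^{-e}(a^{-1})\, t^{-e}$, and then
$$yuy^{-1} \;=\; (a t^e)(w t^s)(\phi^{-e}(a^{-1}) t^{-e}) \;=\; a\, \phi^e(w)\, \phi^s(a^{-1})\, t^s.$$

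Since normal forms in $\Z^n \rtimes_\phi \Z$ are unique, comparing this with $v = x t^r$ forces $r = s$ and
$$x \;=\; a\, \phi^e(w)\, \phi^s(a^{-1}).$$
By the definition of twisted conjugacy in $\Z^n$ with respect to the automorphism $\phi^s$, this equation is exactly the statement $\phi^e(w) \sim_{\phi^s} x$, which is the main conclusion.

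For the last clause, if $\phi^s$ is the identity on $\Z^n$, then the equation above collapses to $x = a \phi^e(w) a^{-1}$; since $\Z^n$ is abelian, the factors of $a$ and $a^{-1}$ cancel and we are left with $x = \phi^e(w)$. There is no real obstacle here beyond keeping track of the $\phi^{\pm e}$ and $\phi^s$ terms during the normal form multiplication; the whole argument is a direct unpacking of the semidirect-product multiplication rule together with uniqueness of normal form.
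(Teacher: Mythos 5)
Your proposal is correct and follows essentially the same route as the paper: write the conjugator in normal form as $a t^e$, push the $t$-powers through using the semidirect-product multiplication rule to get $x t^r = a\,\phi^e(w)\,\phi^s(a^{-1})\,t^s$, and read off $r=s$ and the twisted conjugacy relation from uniqueness of normal forms. Your explicit treatment of the final clause (using that $\Z^n$ is abelian when $\phi^s = \id$) is a small addition the paper leaves implicit, but the argument is the same.
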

\begin{proof}
Let $a =  b t^e\in \Z^n \rtimes_\phi \Z$ such that $v = a u a^{-1}$.  Therefore
$$x t^r = (b t^e )  w t^s (b t^e)^{-1} = b t^e  w t^s   t^{-e} b^{-1} =  $$
$$ b  \phi^e (w) t^s b^{-1} = b  \phi^{e} (w) \phi^{s} (b^{-1}) t^s.$$ 
As such, we have:
$$x t^r = b  \phi^{e} (w) \phi^{s} (b^{-1}) t^s$$
which implies that $s = r$, and that $\phi^{e} (w) \sim_{\phi^s} x$ by $b$.  
\end{proof}

Given $u$ and $v$ as above, the lemma shows that there are two cases one must consider to solve the conjugacy decision and search problems in $\Z^n$-by-$\Z$ groups.  First check if $s = r$.  If not, then $u$ and $v$ are not conjugate.  If the exponents are the same, then there are two cases:

\

\begin{itemize}
\item If $\phi^s$ is trivial, we have to decide if $\exists e \in \Z$ such that $x = \phi^e (w)$.
\item Otherwise, we have to decide if there exists $e$ such that $\phi^{e} (w) \sim_{\phi^s} x$.
\end{itemize}

\

The first case, namely given two vectors $w, x \in \Z^n$ and $\phi \in \mbox{GL}_n (\Z)$ determine if there exists $e \in \Z$ such that $x =  \phi^e (w)$, is known as the orbit problem over $\Z^n$.  In \cite{kannan1986polynomial} Kannan and Lipton provide a polynomial time algorithm that solves the orbit problem over $\Q^n$.  Since the orbit problem over $\Z^n$ is a special case of their work, this algorithm provides a polynomial time solution to the twisted conjugacy problem over $\Z^n$ in the case that $\phi^s$ is trivial.  If such an $e$ is found that satisfies the orbit problem, then we have that $v = t^e u t^{-e}$.

\

For the second case, we use the fact from the lemma that $\exists b \in \Z^n$, $e \in \Z$ such that $x = b \phi^{e} (w) \phi^{s} (b^{-1})$.   Before we begin the algorithm, we state {\bf Lemma 1.7} from \cite{bogopolski2006conjugacy}.

\begin{lem}
For any group $G$, $\phi \in \Aut(G)$, and $u \in G$, $u \sim_\phi \phi(u)$.
\end{lem}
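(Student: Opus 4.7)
The statement is essentially a one-line verification: I claim the witness conjugator is $x = u^{-1}$. The plan is to plug this choice into the definition of twisted conjugacy and check the equation. Substituting, we get
\[
x u \phi(x^{-1}) = u^{-1} \cdot u \cdot \phi\bigl((u^{-1})^{-1}\bigr) = u^{-1} u \,\phi(u) = \phi(u),
\]
which is exactly the twisted conjugacy relation $\phi(u) = x u \phi(x^{-1})$, so $u \sim_\phi \phi(u)$ with witness $x = u^{-1}$.

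There is no real obstacle here; the only thing to be careful about is the placement of the $\phi$ in the definition ($x u \phi(x^{-1})$ versus the variant $\phi(x) u x^{-1}$ used in some references). I would simply state the choice of $x$, perform the substitution, cancel $u^{-1} u$, and note that $\phi((u^{-1})^{-1}) = \phi(u)$. Nothing in the argument uses anything beyond the group and automorphism axioms, so it applies to arbitrary $G$ and $\phi$, matching the generality claimed in the lemma.
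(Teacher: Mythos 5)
Your proposal is correct and is exactly the paper's argument: both take the witness $x = u^{-1}$ and observe that $u^{-1} u \,\phi(u) = \phi(u)$. Nothing further is needed.
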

\begin{proof}
$\phi (u) =  u^{-1} u \phi(u)$.  Therefore $u$ is twisted conjugate over $\phi$ to $\phi(u)$ by $u^{-1}$.
\end{proof}

As such, $\phi^{e} (w) \sim_{\phi^s}  \phi^{e \pm ks}(w)$ for any $k \in \Z$.  Therefore, if there exists an $e$ that satisfies the equation $\phi^e(w) \sim_{\phi^s} x$, then we can find such an $e$ among $\{0, 1, \cdots, |s| - 1\}$.  This is where it is important that we are in the second case as $s$ is not zero.

\

We can now proceed with the full algorithm.   Due to the fact that $x, w \in \Z^n$ and $\phi \in \mbox{GL}_n (\Z)$  it is more convenient to put the equation $x = b \phi^e (w)  \phi^s (b^{-1})$ into additive notation.  As such we write,
$$x = b + \phi^e (w) -  \phi^s (b).$$
 This gives the equation
$$x - \phi^e(w) = (\mbox{Id}_n  - \phi^s)b$$

 where $\mbox{Id}_n$ is the  $n \times n$ identity matrix.  In this way, each $e$ yields a system of linear equations in which we solve for the vector $b$.  There will be a solution to the conjugacy problem, as long as there is some $e$ for which the solution $b$ is in $\Z^n$.  Moreover, we know that if there is a solution to the conjugacy problem, such an $e$ must lie in the set $\{0, 1, \cdots , |s| - 1\}$.  If there exists such an $e$, $u \sim v$ and $bt^e$ is a conjugator.   As such, we proceed by solving the system of linear equations given by each of the possible $e$'s and then checking if the solution, $b$ is in $\Z^n$. In the case that $\mbox{Id}_n  - \phi^s$ is invertible, namely, $\phi^s$ does not have 1 as an eigenvalue, then we can also write:

$$b = (\mbox{Id}_n  - \phi^s)^{-1} (x - \phi^e(w))$$

 For a complete description of the algorithm in pseudocode on inputs $w t^s, x t^r \in \Z^n \rtimes_\phi \Z$, see {\bf{Algorithm 1}} on the following page.  We have the algorithm return {\bf FALSE} if the elements are not conjugate, and a conjugating element if they are.

\begin{algorithm}
\caption{Conjugacy Algorithm for $\Z^n \rtimes_\phi \Z$}
\begin{algorithmic}
\If {$s \neq r$}
    \State return {\bf FALSE}
\ElsIf{$\phi^s$ is the identity}
    \State Run Kannan-Lipton algorithm.
        \If {Kannan-Lipton returns $k$}
                \State return $t^k$
                \Else \,  return {\bf FALSE}
        \EndIf
\Else
         \State{$e := 0$}
    \While {$e<|s|$}
        \If{$\exists b \in \Z^n$ such that $x - \phi^e(w) = (\mbox{Id}_n  - \phi^s)b$}
                \State return $b t^e$
        \Else $\, e := e + 1$
        \EndIf
     \EndWhile
     \State return {\bf FALSE}
    \EndIf

\end{algorithmic}
\end{algorithm}

\section{Complexity Analysis}
In the algorithm above we have two cases each of which can dealt with in polynomially many steps with respect to $n$ and $|s|$.  If $s = r \neq 0$, we find solutions of an $n \times n$ linear system at most $|s|$ times.  On the other hand, if $|s| = |r| = 0$, we use the algorithm of Kannan and Lipton which runs in polynomial time.  Therefore, this algorithm is at most polynomial in terms of $n$ and the lengths of the input words.

\

It is worth pointing out that unlike many of the algorithms group theorists study, this algorithm takes as inputs words in their polycyclic normal forms as opposed to in their geodesic form or just in any general form.  This affects the complexity of the algorithm as all forms have different lengths.  It is worth noting that the geodesic form of a word in a polycyclic group can be logarithmic with respect to the length in normal form.  For instance in the group:

$$G = \Z^2 \rtimes_\phi \Z = \langle g_1, g_2, t \, | \, [g_1, g_2], tg_1t^{-1} = g_1^2 g_2, t g_2 t^{-1} = g_1 g_2 \rangle$$

where $\phi (t) = \begin{pmatrix}
2 & 1\\ 
1 & 1
\end{pmatrix}$ , we have that:

$$t^n  a b t^{-n} = a^{F(2n + 2)} b^{F(2n + 1)}$$

where $F(n)$ is the $n^{th}$ element of the Fibonacci sequence $F = \{1,1,2,3,5, \cdots \}$.  In this way, normal forms in $G$ can be exponentially longer than their geodesic forms.  As such, collecting words in geodesic form and then performing the algorithm would take an exponential number of steps with respect to the geodesic length since the process of collecting involves writing out a word that is exponentially longer than the original word.  On the other hand, in a practical setting, converting words to normal forms is fast (see \cite{eick2004polycyclic}) and the main complexity involved in the algorithm has to do with the exponent above the generator $t$ after collection, which is just the sum of the exponents above the $t$'s in a general word.  As such, after collection, the exponent above $t$ contributes to the length of the word at most what it contributed prior to collection.  In that vein, even though a word may grow in size exponentially after collection, most of the additional steps are involved in collection rather than in actually solving the conjugacy problem.

\section{Acknowledgments and Support}
Delaram Kahrobaei is partially supported by the Office of Naval Research grant N00014120758, the American Association for the Advancement of Science, a PSC-CUNY grant from the CUNY research foundation, as well as the City Tech foundation.

We would also like to thank the Universitat Politècnica de Catalunya where much of this research was conducted and our host Enric Ventura.  Finally, we would like to thank the reviewer(s) for their comments.

\bibliographystyle{plain}
\bibliography{conjbib}

\end{document}